\DeclareMathAlphabet\mathsfsl{OML}{cmm}{b}{it}
\newcommand\cl{\mathop{\mathsfsl{cl}}}
\newcommand\M{\mathcal M}
\newcommand\N{\mathcal N}
\newcommand\LL{\mathcal L}
\newcommand\C[1]{{\upshape(Z\ifx*#1\relax
$2^*$\else #1\fi)}}
\newcommand\CF{\phantom{$^*$}}
\newcommand\restr{\mathord\upharpoonright}
\newtheorem{theorem}{Theorem}
\newtheorem{lemma}[theorem]{Lemma}
\newtheorem{claim}[theorem]{Claim}
\newtheorem{corollary}[theorem]{Corollary}
\theoremstyle{remark}
\newtheorem{remark}{Remark}
\theoremstyle{plain}
\let\theta\vartheta
\title{\bf Cyclic flats of a polymatroid}
\author{Laszlo Csirmaz%
\thanks{Central European University}
\thanks{emal:~csirmaz@renyi.hu}}
\date{\small\it Dedicated to the memory of Frantisek Mat\'u\v s}
\begin{document}
\maketitle
\begin{abstract}

Polymatroids can be considered as ``fractional matroids'' where the rank
function is not required to be integer valued. Many, but not every notion in
matroid terminology translates naturally to polymatroids. Defining cyclic
flats of a polymatroid carefully, the characterization by Bonin and de Mier of
the ranked lattice of cyclic flats carries over to
polymatroids. The main tool, which might be of independent interest, is a
convolution-like method which creates a polymatroid from a ranked lattice
and a discrete measure. Examples show the ease of using the convolution
technique.

\noindent{\bf Keywords:} polymatroid, cyclic flats, convolution, ranked
lattice.

\noindent{\bf AMS Subject Classification:} 03G10, 05B35

\end{abstract}


\section{Introduction}\label{sec:intro}

Cyclic flats of a matroid played an important role in matroid theory. They
form a \emph{ranked lattice}, i.e., a lattice with a non-negative number
assigned to lattice elements. Bonin and de Mier in \cite{cyclic-flats} gave
a characterization of the ranked lattices arising this way. They quote Sims 
\cite{sims} where the rank function of the embedding matroid is specified
explicitly by some convolution-like formula.

Generalizing cyclic flats to polymatroids is not completely straightforward,
see \cite{pcyclic} and \cite{distr} where the same definition of
polymatroidal cyclic flats arose as we use in this paper. Our main
contribution is a complete characterization of the ranked lattice of cyclic
flats of a polymatroid \emph{together with singleton ranks}. Motivated by
Sims' construction \cite{sims}, the \emph{convolution} of a ranked lattice
and a discrete measure (determined by the singleton ranks) is defined. This
definition is an extension of the usual convolution of polymatroids, for
details see \cite{matus-csirmaz}. Similarly to the matroid case, this
convolution recovers the embedding polymatroid for the given ranked lattice
and measure. We carefully identify the role of different conditions. Knowing
which condition ensures what property allows us to use the convolution
to create polymatroids with desired properties. It is illustrated by a
simple example. A more substantial application is in \cite{sticky-poly}.

Traditionally, convolution is symmetric. In the last section we propose the
convolution of two ranked lattices which is symmetric and falls back to the
previous notion when the second lattice is a complete modular polymatroid.
In a special case this convolution gives an interesting polymatroid
extension. It is open under which general and useful conditions will the
convolution be a polymatroid.

\section{Preliminaries}\label{sec:prelim}

\subsection{Notation}
All sets in this paper are finite. A \emph{polymatroid $\M=(f,M)$} is a
non-negative, monotone and submodular function $f$ defined on the subsets of
$M$. Here $M$ is the \emph{ground set}, and $f$ is the \emph{rank function}.
A polymatroid is \emph{integer} if the rank function takes integer values
only, and it is a \emph{matroid} if it is integer, and all singletons have
value either zero or one. For a throughout treatment of matroids see
\cite{oxley}. Polymatroids were introduced by Edmonds \cite{edmonds},
relevant results on polymatroids can be found, e.g., in 
\cite{one-adhesive,lovasz,fmadhe}.

A polymatroid can be considered as ``fractional matroid''. Relaxing a
combinatorial notion to its fractional version allows different techniques
to apply. This not different in case of polymatroids. While tools handling
matroids are mainly combinatorial, polymatroids have a nice geometrical
interpretation allowing geometrical (and continuity) reasoning.

Following the usual practice, ground sets and their subsets are denoted by
capital letters, their elements by lower case letters. The union sign is
frequently omitted as well as the curly brackets around singletons, thus
$abA$ denotes the set $\{a,b\}\cup A$. The set difference has
lower priority than union or intersection, thus $aA{-}b\cap B$ denotes 
$(\{a\}\cup A){-}(\{b\}\cap B)$. For a function defined on subsets of a set
the usual information-theoretical abbreviations are used. Most notably, we
write $f(I|K)$ for $f(IK)-f(K)$.

\smallskip

A (discrete) \emph{measure $\mu$} on $M$  is an additive function on the
subsets of $M$ with $\mu(\emptyset)=0$. As $M$ is finite, the measure is 
determined by its value on the singletons as
$$
    \mu(A) = {\textstyle\sum} \{ \mu(a):\,a\in A \}.
$$
If $\M=(f,M)$ is a polymatroid, then the measure $\mu_\M$, or just
$\mu_f$, is the one defined by the singleton ranks.
Submodularity implies $\mu_f(A)\ge f(A)$ for all $A\subseteq M$.

\smallskip

A collection of subsets is a \emph{lattice $\LL$} if any two elements
$Z_1,Z_2\in\LL$ have, in $\LL$, a least upper bound -- their \emph{join} --,
and a greatest lower bound -- their \emph{meet} --, using the standard
subset relation as the ordering. The notation for join and meet are $Z_1\lor
Z_2$ and $Z_1\land Z_2$, respectively. We write $Z_1<Z_2$ if $Z_1$ is
strictly below $Z_2$, which is the same as $Z_1\subset Z_2$. The lattice
elements $Z_1$ and $Z_2$ are \emph{incomparable} if they are
different and neither $Z_1<Z_2$ nor $Z_2<Z_1$ holds. As $\LL$ is finite, it
has a smallest element (the meet of all $Z\in\LL$), denoted by
$O_\LL$, and a largest element (the join of all $Z\in\LL$), denoted by
$I_\LL$. The pair $(\lambda,\LL)$ is a \emph{ranked lattice} if the rank
function $\lambda$ assigns non-negative real values to lattice elements.
$\lambda$ is \emph{pointed} if $\lambda(O_\LL)=0$, and \emph{monotone} if
$Z_1\le Z_2$ implies $\lambda(Z_1)\le \lambda(Z_2)$.

\subsection{Flats, cyclic flats}

In this section $\M=(f,M)$ is a fixed polymatroid. The element $a\in M$ is a
\emph{loop} if it has rank zero: $f(a)=0$, in which case $f(aA)=f(A)$ for all
$A\subseteq M$. The element $a\in M$ is a \emph{coloop} or \emph{isthmus} if
$f(a|M{-}a)=f(a)$, which means $f(aA)=f(A)+f(a)$ for every $A\subset M$ not
containing $a$. Both loops and coloops add trivial structural properties
only, thus very frequently the polymatroid is, or can be, assumed to have 
no loops and no coloops.

The subset $F\subseteq M$ is a \emph{flat} if proper extensions of $F$ have
strictly larger ranks. The intersection of flats is a flat, and the
\emph{closure} of $A\subseteq M$, denoted by $\cl_\M(A)$, or simply by
$\cl(A)$ when $\M$ is clear from the context, is the smallest flat
containing $A$. The ground set $M$ is always a flat. Flats of a polymatroid
form a lattice where the meet is the intersection, and the join is the
closure of the union. Flats immediately under the maximal flat $M$ are
called \emph{hyperplanes}. The minimal flat is the collection of all loops,
which can be the empty set if $\M$ has no loops.

The flat $C\subseteq M$ is \emph{cyclic} if for all $i\in C$ either $i$ is a
loop, or $f(i|C{-}i)<f(i)$, see \cite{pcyclic,distr}. In particular, the
minimal flat (containing only loops) is cyclic. When $\M$ is a matroid, this
definition of cyclic flats is equivalent to the original one, namely that
$C$ is the union of circuits (minimal connected sets), see
\cite{cyclic-flats,oxley}.

Similar to the matroid case, cyclic flats form a lattice. The proof relies
on the following structural property of cyclic flats.

\begin{lemma}\label{lemma:cyclic-flat}
Every flat $F$ contains a unique maximal cyclic flat $C\subseteq F$;
moreover for all $C\subseteq A \subseteq F$ we have
$$
    f(A) = f(C)+\mu_f(A{-}C).
$$
\end{lemma}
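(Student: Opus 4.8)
The plan is to exhibit the maximal cyclic flat explicitly. Let $D$ be the set of non-loop coloops of $F$, namely $D:=\{\,i\in F: f(i)>0\text{ and }f(i|F{-}i)=f(i)\,\}$, and put $C:=F{-}D$. (One must be careful to keep the loops of $\M$ out of $D$: every flat contains all loops, so were $C$ to omit one it could not be a flat; under the naive condition $f(i|F{-}i)=f(i)$ alone the loops would slip in.) Everything rests on one elementary observation: if $d\in D$ and $Y\subseteq F{-}d$, then $f(d|Y)=f(d)$. Indeed submodularity applied to the pair $Yd,\,F{-}d$ (whose union is $F$ and whose intersection is $Y$) gives $f(d|Y)\ge f(d|F{-}d)=f(d)$, while $f(d|Y)\le f(d)$ holds always. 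Inserting the elements of an arbitrary $E\subseteq D$ into a set $Y\subseteq F$ disjoint from $E$ one at a time keeps each conditioning set inside $F$ minus the element just inserted, so induction on $|E|$ upgrades this to $f(Y\cup E)=f(Y)+\mu_f(E)$.

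Granting this, the rank formula is immediate: if $C\subseteq A\subseteq F$ then $A{-}C\subseteq F{-}C=D$, hence $f(A)=f(C)+\mu_f(A{-}C)$; in particular $f(F)=f(C)+\mu_f(D)$. That $C$ is a flat is also quick: for $a\notin C$, either $a\in D$ and then $f(a|C)=f(a)>0$ by the observation (with $Y=C\subseteq F{-}a$), or $a\notin F$ and then $f(a|C)\ge f(a|F)>0$ since $F$ is a flat. For maximality and uniqueness, let $C'\subseteq F$ be any cyclic flat; if some $d$ lay in $C'\cap D$ then $d$ would be a non-loop with $C'{-}d\subseteq F{-}d$, so cyclicity of $C'$ would force $f(d)>f(d|C'{-}d)\ge f(d|F{-}d)=f(d)$, which is absurd; hence $C'\subseteq F{-}D=C$.

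The one step that genuinely needs an argument is that $C$ itself is cyclic, since this cannot be read off by inspection. Let $i\in C$ be a non-loop. Applying the summation identity to $Y=C{-}i$ and $E=D$ (disjoint subsets of $F$) gives $f(F{-}i)=f(C{-}i)+\mu_f(D)$, so
\[
 f(i|F{-}i)=f(F)-f(F{-}i)=\bigl(f(C)+\mu_f(D)\bigr)-\bigl(f(C{-}i)+\mu_f(D)\bigr)=f(i|C{-}i).
\]
Were this common value equal to $f(i)$, the element $i$ would be a non-loop coloop of $F$, i.e.\ $i\in D$, contradicting $i\in C$; therefore $f(i|C{-}i)<f(i)$, which is precisely cyclicity of $C$. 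Beyond this indirect point, the rest is bookkeeping: two applications of submodularity and two short inductions on the number of coloops being added.
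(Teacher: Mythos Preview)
Your proof is correct and takes a genuinely different route from the paper's. The paper argues in two iterative passes: first it peels off non-loop elements $x_j$ with $f(x_j|F_j{-}x_j)=f(x_j)$ one at a time, showing each intermediate $F_j$ is a flat and the terminal one is cyclic with the stated rank property; then, for uniqueness, it iterates the map $F_j\mapsto\{i\in F_j:\text{$i$ a loop or }f(i|F_j{-}i)<f(i)\}$ and shows any maximal cyclic flat survives every step until the sequence stabilises. You instead define $C=F{-}D$ in one shot and derive everything from the single additive identity $f(Y\cup E)=f(Y)+\mu_f(E)$ for $E\subseteq D$ and $Y\subseteq F$ disjoint from $E$. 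In particular your computation $f(i|C{-}i)=f(i|F{-}i)$ shows that the paper's second iteration already stabilises after one step---removing all non-loop coloops of $F$ at once yields a cyclic flat---a fact the paper does not isolate. Your approach is shorter and gives an explicit description of $C$; the paper's stepwise construction makes the rank formula along the particular chain $F=F_1\supset F_2\supset\cdots\supset C$ transparent by telescoping, but then needs the second pass to pin down uniqueness, whereas you get uniqueness almost for free from the observation that no cyclic flat inside $F$ can contain an element of $D$.
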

\begin{proof}
First we show that $F$ contains a maximal cyclic flat $C$ with the given
property, then we show that the maximal cyclic flat inside $F$ is unique.

Start with $F_1=F$, and suppose we have defined $F_j$ for some $j\ge 1$. If
there is an element $x_j\in F_j$ such that $f(x_j)>0$ and
$f(F_j)-f(F_j{-}x_j) = f(x_j)$, then let $F_{j+1}=F_j{-}x_j$, otherwise
stop. Submodularity gives that for each $i<j$
$f(x_iF_j)=f(x_i)+f(F_j)>f(F_j)$, thus $F_j$ is a flat (as $F_j$ has smaller
rank than $x_iF_j$ for every $x_i\in F{-}F_j$), and the last $F_j=C$ is
cyclic. As no proper extension of $C$ inside $F$ is cyclic, it is a maximal
cyclic flat in $F$ with the claimed property.

Second, suppose $C\subseteq F_1=F$ is a maximal cyclic flat. If $i\in C$ is
not a loop then $f(i|F_1{-}i)\le f(i|C{-}i)< f(i)$, consequently $C$ is a
subset of $F_2=\{i\in F_1: i$ is a loop or $f(i|F_1{-}i)<f(i)\}$. Define
similarly the sets $F_1\supseteq F_2\supseteq F_3 \supseteq\cdots\supseteq
C$. It is clear that each $F_j$ is a flat, and when $F_j=F_{j+1}$ then it is
cyclic. As it contains $C$, it must equal $C$.
\end{proof}

\begin{corollary}\label{corr:unique}
The rank of cyclic flats and singletons determine the polymatroid.
\end{corollary}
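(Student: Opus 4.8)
The plan is to pin down $f$ by an explicit formula that mentions only the data in question. Concretely, I would prove that for every $A\subseteq M$
$$
   f(A)=\min\bigl\{\,f(C)+\mu_f(A{-}C)\ :\ C\subseteq M\text{ a cyclic flat}\,\bigr\}.
$$
The collection of cyclic flats is non-empty (the minimal flat is cyclic), so the minimum makes sense, and its value depends only on which subsets are cyclic flats, on the values of $f$ on them, and on the singleton ranks (through $\mu_f$). Hence this identity immediately yields the corollary.

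For the inequality ``$\le$'' fix a cyclic flat $C$. Monotonicity gives $f(A)\le f(A\cup C)$, and adjoining the elements of $A{-}C$ to $C$ one at a time while using submodularity gives $f(A\cup C)-f(C)=f(A{-}C\mid C)\le\mu_f(A{-}C)$. Thus $f(A)\le f(C)+\mu_f(A{-}C)$ for \emph{every} cyclic flat $C$; note that no containment relation between $A$ and $C$ is needed here.

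For ``$\ge$'' I would exhibit a cyclic flat that meets the bound. Let $F=\cl(A)$, so that $f(A)=f(F)$, and let $C\subseteq F$ be the maximal cyclic flat inside $F$ supplied by Lemma~\ref{lemma:cyclic-flat}; the same lemma also gives $f(F)=f(C)+\mu_f(F{-}C)$. Since $A\subseteq F$ we have $A{-}C\subseteq F{-}C$, and $\mu_f$ is monotone because singleton ranks are non-negative, whence $f(A)=f(F)=f(C)+\mu_f(F{-}C)\ge f(C)+\mu_f(A{-}C)$. This particular term is therefore $\le f(A)$, and combined with the previous paragraph it equals $f(A)$ and realizes the minimum.

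The heavy lifting is already done by Lemma~\ref{lemma:cyclic-flat}, so no real obstacle remains; the one point to handle carefully is the lower bound, where one must first replace $A$ by its closure and invoke the lemma on that flat — running the deletion process from the proof of the lemma directly on the possibly non-closed set $A$ need not terminate at a cyclic flat.
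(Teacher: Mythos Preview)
Your proof is correct and follows essentially the same route as the paper: both establish the formula $f(A)=\min_C\{f(C)+\mu_f(A{-}C)\}$, get the upper bound from submodularity, and for the lower bound pass to $F=\cl(A)$ and invoke Lemma~\ref{lemma:cyclic-flat} on the maximal cyclic flat $C\subseteq F$. The only cosmetic difference is that the paper applies the lemma to the intermediate set $A\cup C$ (which lies between $C$ and $F$) to obtain equality $f(A)=f(A\cup C)=f(C)+\mu_f(A{-}C)$ directly, whereas you apply it to $F$ and then use $\mu_f(F{-}C)\ge\mu_f(A{-}C)$.
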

\begin{proof}
This is so as the rank of $A$ is the minimum of $f(C)+\mu_f(A{-}C)$ as 
$C$ runs over all cyclic flats. Indeed,
$f(A)\le f(C)+f(A{-}C)\le f(C)+\mu_f(A{-}C)$ by submodularity, thus it is
enough to show that for some cyclic flat $C$ equality holds. Let $F=\cl(A)$ and
$C\subseteq F$ be the maximal cyclic flat in $F$. Then $f(F)=f(A)=f(AC)$,
and by Lemma \ref{lemma:cyclic-flat},
$$
    f(AC)=f(C)+\mu_f(AC{-}C)=f(C)+\mu_f(A{-}C),
$$
as required.
\end{proof}

\begin{claim}\label{claim:cyclic-flats-is-lattice}
Cyclic flats of a polymatroid form a lattice.
\end{claim}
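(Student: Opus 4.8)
The plan is to reduce everything to the lattice of flats together with Lemma~\ref{lemma:cyclic-flat}. First I would record that the \emph{unique maximal} cyclic flat that Lemma~\ref{lemma:cyclic-flat} supplies inside a flat $F$ is in fact the \emph{largest} one: the cyclic flats contained in $F$ form a finite poset, so each of them sits below some maximal cyclic flat of $F$, and by uniqueness there is only a single such maximal flat. Denote it $\kappa(F)$; then $D\subseteq\kappa(F)$ for every cyclic flat $D\subseteq F$.

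For the meet of two cyclic flats $C_1,C_2$ I would take $\kappa(C_1\cap C_2)$. This is a cyclic flat below both $C_1$ and $C_2$, and every cyclic flat $D$ with $D\subseteq C_1$ and $D\subseteq C_2$ lies inside the flat $C_1\cap C_2$, hence $D\subseteq\kappa(C_1\cap C_2)$. So $\kappa(C_1\cap C_2)$ is the greatest lower bound.

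For the join I would take $F=\cl(C_1\cup C_2)$, the join of $C_1$ and $C_2$ in the lattice of flats, and first check that $F$ is itself cyclic. Indeed $C_1$ and $C_2$ are cyclic flats contained in the flat $F$, so $C_1\cup C_2\subseteq\kappa(F)$; since $\kappa(F)$ is a flat this yields $F=\cl(C_1\cup C_2)\subseteq\kappa(F)\subseteq F$, i.e.\ $F=\kappa(F)$ is cyclic. Now $F$ is an upper bound of $C_1,C_2$ among cyclic flats, and any cyclic flat containing $C_1\cup C_2$ is a flat, hence contains $\cl(C_1\cup C_2)=F$. So $F$ is the least upper bound, and the cyclic flats form a lattice. (Alternatively, once the meet is available the cyclic flats form a finite meet-semilattice with top element $\kappa(M)$ and are therefore automatically a lattice; but the explicit formula $C_1\lor C_2=\cl(C_1\cup C_2)$ is worth keeping.)

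The argument is short once Lemma~\ref{lemma:cyclic-flat} is in hand. The one step deserving care -- and the only real obstacle -- is the passage from ``unique maximal'' to ``largest'' cyclic flat inside a flat, together with the observation that, in contrast to the intersection, the flat-theoretic join of two cyclic flats remains cyclic; this is precisely why the meet must be corrected by applying $\kappa$ whereas the join need not be.
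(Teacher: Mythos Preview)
Your argument is correct and for the meet it matches the paper's: both apply Lemma~\ref{lemma:cyclic-flat} to the flat $C_1\cap C_2$ (you are simply more explicit about why ``unique maximal'' upgrades to ``largest''). For the join the two proofs diverge. The paper verifies directly that $C=\cl(C_1\cup C_2)$ is cyclic by checking the defining inequality at each $i\in C$: for $i\in C{-}C_1C_2$ one has $f(i\mid C{-}i)=0$ because $f(C_1C_2)=f(C)$, while for non-loop $i\in C_j$ submodularity gives $f(i\mid C{-}i)\le f(i\mid C_j{-}i)<f(i)$. Your route is more lattice-theoretic: since $C_1,C_2\subseteq\kappa(F)$ and $\kappa(F)$ is a flat, closure forces $F=\kappa(F)$. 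Your version is cleaner and reuses Lemma~\ref{lemma:cyclic-flat} symmetrically for both meet and join; the paper's version is self-contained at the join (no appeal to the ``largest'' statement) and yields the extra information $f(i\mid C{-}i)=0$ for $i\in C{-}C_1C_2$.
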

\begin{proof}
Let $C_1$ and $C_2$ be cyclic flats. Then $C_1\cap C_2$ is a flat which
contains a unique maximal cyclic flat by Lemma \ref{lemma:cyclic-flat} above. This
is the largest cyclic flat below $C_1$ and $C_2$. The smallest upper bound
of $C_1$ and $C_2$ is $C=\cl(C_1\cup C_2)$. Indeed, this is a flat, and we
claim that it is also cyclic. If $i\in C{-}C_1C_2$ then $i$ is not a loop 
(as loops are in $C_1\cap C_2$), 
$f(iC_1C_2)=f(C)$, thus $f(i|C{-}i)=0<f(i)$. If, say, $i\in C_1$ and
$f(i)>0$ then $f(i|C{-}i)\le f(i|C_1{-}i)<f(i)$ proving that $C$ is cyclic
indeed.
\end{proof}

\subsection{Convolution}\label{subsec:convolution}

Let $(\lambda,\LL)$ be a ranked lattice and $\mu$ be a (discrete) measure
both defined on subsets of $M$. The \emph{convolution} of the ranked lattice and the
measure, denoted by $\lambda*\mu$, assigns a non-negative value to each
subset of $M$ as follows:
\begin{equation}\label{eq:convolution}
   \lambda*\mu \,:\, A\mapsto \min\,\{ \lambda(Z)+\mu(A{-}Z)\,:Z\in \LL \}.
\end{equation}
In the sequel we write $r$ instead of $\lambda*\mu$. When $\LL$ contains all
subsets of $M$ and $\lambda$ is the rank function of a polymatroid, then
(\ref{eq:convolution}) is equivalent to the usual convolution of two
polymatroids, see \cite{lovasz,matus-csirmaz}.

\section{Characterizing cyclic flats}\label{sec:characterizing}

\subsection{Conditions}\label{subsec:conditions}

Convolution will be used to recover a polymatroid from the lattice of its 
cyclic flats. Different conditions on the ranked lattice and the measure ensure
different properties of the convolution. Rather than listing them repeatedly,
we specify them here. In what follows, $(\lambda,\LL)$ is a ranked lattice and 
$\mu$ is a measure, both defined on subsets of the same set.

\begin{itemize}\setlength\itemsep{0pt}%
\setlength\belowdisplayskip{2pt plus 1 pt minus 1pt}%
\setlength\abovedisplayskip{5pt plus 1pt minus 1pt}%
\item[\C1\CF] $\lambda(O_\LL)=0$, i.e., $\lambda$ is pointed.
\item[\C2\CF] for comparable lattice elements $Z_1 \le Z_2\in\LL$,
$$ 
    0\le \lambda(Z_2)-\lambda(Z_1)\le \mu(Z_2{-}Z_1).
$$
\item[\C*] for different comparable lattice elements $Z_1 < Z_2\in\LL$,
$$
    0<\lambda(Z_2)-\lambda(Z_1)<\mu(Z_2{-}Z_1).
$$
\item[\C3\CF] for any two lattice elements $Z_1,Z_2\in\LL$,
$$
  \lambda(Z_1)+\lambda(Z_2)\ge \lambda(Z_1\lor Z_2)+\lambda(Z_2\land Z_2)
   +\mu(Z_1\cap Z_2{-}Z_1\land Z_2).
$$
\item[\C4\CF] if $a\in Z\in\LL$, then $\mu(a)\le \lambda(Z)$.
\item[\C5\CF] a) $\lambda(Z)>0$ for $Z\not=O_\LL$;
b) $\mu(a)>0$ for $a\notin O_\LL$.
\end{itemize}
Condition \C2 is monotonicity, \C* is strict monotonicity.
\C3 is submodularity corrected for the the difference 
between $Z_1\cap Z_2$ and $Z_1\land Z_2$ (the meet is always a subset of
the intersection). Some remarks are due.

\begin{remark}\label{remark:1}
\C3 trivially holds when $Z_1$ and $Z_2$ are comparable, thus it is
enough to require it to hold for incomparable $Z_1$ and $Z_2$.
\end{remark}

\begin{remark}\label{remark:2}
\C1 and \C4 implies $\mu(a)=0$ for $a\in O_\LL$ as $0\le\mu(a)\le
\lambda(O_\LL)=0$. \C5 a) trivially follows from \C*.
\end{remark}

\begin{remark}\label{remark:3}
In conditions \C2, \C* and \C3 the subset for which $\mu$ is applied is
always disjoint from $O_\LL$, thus the values $\mu(a)$ for $a\in O_\LL$ are
irrelevant.
\end{remark}

\begin{remark}\label{remark:4}
Conditions \C2, \C* and \C3 are homogeneous in $\lambda$. Thus they hold
if and only if they hold for the pointed rank function
$\lambda(Z)-\lambda(O_\LL)$.
\end{remark}

\begin{remark}\label{remark:5}
Suppose $\mu(a)=0$ for $a\in O_\LL$ and $\mu(a)=1$ otherwise, and that
$\lambda$ is integer valued. Then \C* implies both \C4 and \C5. Consequently
``\C1 and \C* and \C3 and \C4 and \C5'' is equivalent to ``\C1 and \C* and
\C3'', which is the same as the list of axioms in \cite[Theorem
3.2]{cyclic-flats}, as by Remark \ref{remark:3} in this case the measure
$\mu$ can be replaced by the cardinality function.
\end{remark}

\subsection{Cyclic flats of polymatroids}\label{subsec:characterization}

For each polymatroid $\M$ we define a pair of a ranked lattice and a measure
as follows: the ranked lattice is collection of cyclic flats endowed with
the polymatroid rank, and the measure is $\mu=\mu_\M$ generated by the rank
of singletons.

\begin{theorem}\label{thm:main}
The pair of the ranked lattice $(\lambda,\LL)$ and the measure $\mu$ is
defined from a polymatroid if and only if they satisfy \C1, \C*, \C3, \C4
and \C5. This polymatroid is uniquely defined, and is integer
if and only if $\lambda$ and $\mu$ are integer valued.
\end{theorem}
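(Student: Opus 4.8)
The plan is as follows. Write $r=\lambda*\mu$ for the convolution~(\ref{eq:convolution}). \emph{Uniqueness and the integrality claim come essentially for free.} By Corollary~\ref{corr:unique}, any polymatroid whose ranked lattice of cyclic flats is $(\lambda,\LL)$ and whose singleton ranks give $\mu$ must have rank function $A\mapsto\min\{\lambda(Z)+\mu(A{-}Z):Z\in\LL\}$, i.e.\ exactly $r$; hence at most one such polymatroid exists, and once the ``if'' direction exhibits $(r,M)$ it is automatically \emph{the} one. Integrality then follows because a minimum of finite sums of integers is an integer, and conversely $\lambda=r\restr\LL$ and $\mu(a)=r(a)$ are read off from $r$.

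\emph{The ``only if'' direction} is a direct verification for $\M=(f,M)$ with $\LL$ its lattice of cyclic flats (Claim~\ref{claim:cyclic-flats-is-lattice}), $\lambda=f\restr\LL$, $\mu=\mu_f$. Here \C1 holds because $O_\LL$ is the cyclic flat of all loops, of rank $0$; \C4 is monotonicity of $f$; \C5 records that non-loops have positive rank (and $\lambda(Z)>0$ for $Z\ne O_\LL$ also follows from \C*). For \C*, if $C_1<C_2$ in $\LL$ then $f(C_2)>f(C_1)$ since $C_1$ is a flat, and telescoping along an enumeration of $C_2{-}C_1$ together with submodularity gives $f(C_2)-f(C_1)\le\sum_{i\in C_2-C_1}f(i|C_2{-}i)<\sum_{i\in C_2-C_1}f(i)=\mu(C_2{-}C_1)$, the strict step because every such $i$ is a non-loop of the cyclic flat $C_2$. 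Finally \C3 is submodularity $f(C_1)+f(C_2)\ge f(C_1\cup C_2)+f(C_1\cap C_2)$ rewritten via $f(C_1\cup C_2)=f(\cl(C_1\cup C_2))=\lambda(C_1\lor C_2)$ and, by Lemma~\ref{lemma:cyclic-flat} applied to the maximal cyclic flat $C_1\land C_2$ inside $C_1\cap C_2$, $f(C_1\cap C_2)=\lambda(C_1\land C_2)+\mu(C_1\cap C_2{-}C_1\land C_2)$.

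\emph{The ``if'' direction.} Assume \C1, \C*, \C3, \C4, \C5; note \C* implies the monotonicity condition \C2 and, with \C1 and \C4, that $\mu(a)=0$ for $a\in O_\LL$ (Remark~\ref{remark:2}). First, $(r,M)$ is a polymatroid: non-negativity is clear, monotonicity follows by evaluating $r(B)$ at its optimal $Z$ and using $\mu(A{-}Z)\le\mu(B{-}Z)$ for $A\subseteq B$, and submodularity is the one computational step — with $Z_1,Z_2$ optimal for $A,B$, substitute $Z_1\lor Z_2$ and $Z_1\land Z_2$ into~(\ref{eq:convolution}) for $A\cup B$ and $A\cap B$, cancel the $\lambda$-terms using \C3, and note that the remaining inequality $\mu(Z_1\cap Z_2{-}Z_1\land Z_2)+\mu(A{-}Z_1)+\mu(B{-}Z_2)\ge\mu(A\cup B{-}Z_1\lor Z_2)+\mu(A\cap B{-}Z_1\land Z_2)$ holds element by element (since $Z_1\land Z_2\subseteq Z_1\cap Z_2$ and $Z_1,Z_2\subseteq Z_1\lor Z_2$, a brief case check on the membership of a point settles it), and $\mu\ge0$. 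Next $\mu_r=\mu$: taking $Z=O_\LL$ gives $r(a)\le\mu(a)$, while every term $\lambda(Z)+\mu(a{-}Z)$ is at least $\mu(a)$ — by \C4 when $a\in Z$, trivially otherwise. In particular the loops of $(r,M)$ are exactly $O_\LL$.

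It remains to identify $\LL$ with the lattice of cyclic flats of $(r,M)$ and to check $r\restr\LL=\lambda$; this is the core of the argument, and where strict monotonicity \C* does the real work. \emph{(i)} For $Z\in\LL$, $r(Z)=\lambda(Z)$: ``$\le$'' from $Z'=Z$; ``$\ge$'' from a short manipulation of \C3 (on $Z,Z'$) together with \C2, giving $\lambda(Z')+\mu(Z{-}Z')\ge\lambda(Z\lor Z')\ge\lambda(Z)$ for all $Z'\in\LL$. \emph{(ii)} Each $Z\in\LL$ is a flat: for $x\notin Z$ and $Z'\in\LL$, if $x\notin Z'$ the term $\lambda(Z')+\mu(Zx{-}Z')$ equals $\lambda(Z')+\mu(Z{-}Z')+\mu(x)\ge\lambda(Z)+\mu(x)>\lambda(Z)$ (as $x\notin O_\LL$, so $\mu(x)>0$ by \C5), and if $x\in Z'$ then $Z<Z\lor Z'$, so that term is $\ge\lambda(Z\lor Z')>\lambda(Z)$ by (i) and \C*; hence $r(Zx)>r(Z)$. \emph{(iii)} Each $Z\in\LL$ is cyclic: for a non-loop $i\in Z$ the same case split yields $r(Z{-}i)>r(Z)-\mu(i)$, using that an equality $\lambda(Z')+\mu(Z{-}Z')=\lambda(Z)$ would force $Z'=Z$ by \C*, which is impossible when $i\notin Z'$. \emph{(iv)} No other cyclic flats: if $C$ is a cyclic flat of $(r,M)$ and $Z^{*}\in\LL$ is optimal for $C$, then $Z^{*}\subseteq C$ (else $r(Cx)\le r(C)$ for some $x\in Z^{*}{-}C$, contradicting that $C$ is a flat), and if $Z^{*}\subsetneq C$ then a non-loop $i\in C{-}Z^{*}$ gives $r(C{-}i)\le\lambda(Z^{*})+\mu(C{-}Z^{*})-\mu(i)=r(C)-\mu(i)$, contradicting cyclicity; so $C=Z^{*}\in\LL$. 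With (i)--(iv), $(r,M)$ realizes $((\lambda,\LL),\mu)$, and the opening paragraph finishes the proof. \textbf{The main obstacle} is precisely steps (ii)--(iv): producing the \emph{strict} inequalities that distinguish flats and detect cyclicity, which is exactly the purpose of \C*; submodularity of $r$ is the only other genuine work and collapses, via \C3, to a routine pointwise check.
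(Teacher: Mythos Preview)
Your proposal follows essentially the same route as the paper: uniqueness via Corollary~\ref{corr:unique}; the ``only if'' direction by direct verification (the paper's Claim~\ref{claim:conditions-necessary}); and the ``if'' direction by showing the convolution $r=\lambda*\mu$ is a polymatroid whose singleton ranks give $\mu$ and whose cyclic flats are exactly $\LL$ with $r\restr\LL=\lambda$ (the paper packages this as Theorem~\ref{thm:convolution}, Lemma~\ref{lemma:zero-min-LL}, Lemma~\ref{lemma:a-or-z}, and Claims~\ref{claim:r-lambda-equal}--\ref{claim:LL-to-conv}). Your steps (i)--(iv) are correct and match the paper's arguments closely.

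There is, however, one concrete error in your verification of \C*. The displayed chain
\[
f(C_2)-f(C_1)\;\le\;\sum_{i\in C_2{-}C_1}f(i\mid C_2{-}i)\;<\;\sum_{i\in C_2{-}C_1}f(i)
\]
has its first inequality the wrong way round. Telescoping gives $f(C_2)-f(C_1)=\sum_j f\bigl(i_j\mid C_1\cup\{i_1,\dots,i_{j-1}\}\bigr)$, and by submodularity each summand is \emph{at least} $f(i_j\mid C_2{-}i_j)$, not at most. Concretely, take $C_1=\emptyset$ and $C_2=\{a,b\}$ with $f(a)=f(b)=f(ab)=1$: then $f(C_2)-f(C_1)=1$ while your middle sum is $f(a\mid b)+f(b\mid a)=0$. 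The fix is immediate: in the same telescoping bound each term above by $f(i_j)$ to get $f(C_2)-f(C_1)\le\mu(C_2{-}C_1)$, and note that the \emph{last} term equals $f(i_k\mid C_2{-}i_k)<f(i_k)$ since $C_2$ is cyclic and $i_k$ is not a loop, which gives the strict inequality. The paper instead argues by contradiction: assuming $f(C_2)-f(C_1)=\mu(C_2{-}C_1)$, for any $a\in C_2{-}C_1$ one obtains $f(C_2)=f(C_2{-}a)+f(a)$, contradicting that $C_2$ is cyclic.
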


By Remark \ref{remark:5}, the result in \cite{cyclic-flats} characterizing
the lattice of cyclic flats of matroids follows immediately from this
theorem. The proof proceeds in two stages. The easy part is Claim
\ref{claim:conditions-necessary}, which shows that the lattice and the
measure coming from a polymatroid satisfy the conditions. The converse
follows from the fact that the convolution $\lambda*\mu$ recovers a
polymatroid which defines $\lambda$ and $\mu$. This is proved in Claim
\ref{claim:conditions-sufficient} using a series of claims and lemmas from
Section \ref{sec:proof} highlighting the role of each condition. The uniqueness
follows from Corollary \ref{corr:unique}: $\lambda$ and $\mu$ together 
determine the polymatroid rank function.

\begin{claim}\label{claim:conditions-necessary}
Let $\LL$ be the lattice of cyclic flats of the polymatroid $\M=(f,M)$.
Define $\lambda(Z)=f(Z)$ for $Z\in\LL$.
Properties \C1, \C*, \C3, \C4, \C5 hold for $(\lambda,\LL)$ and $\mu=\mu_\M$.
\end{claim}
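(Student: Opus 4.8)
The plan is to verify each of the five conditions in turn, using the structural results already established—chiefly Lemma~\ref{lemma:cyclic-flat}, which controls the rank function on the interval between a cyclic flat and any flat containing it, together with the basic facts that cyclic flats form a lattice (Claim~\ref{claim:cyclic-flats-is-lattice}) in which meet is intersection and join is $\cl(\cdot\cup\cdot)$, and that $\mu=\mu_\M$ dominates $f$ on every subset by submodularity.

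\begin{proof}
Throughout, $\LL$ is the lattice of cyclic flats of $\M=(f,M)$ and $\lambda=f\restr\LL$. Recall that in $\LL$ the meet of $Z_1,Z_2$ is the maximal cyclic flat contained in $Z_1\cap Z_2$, while the join is $\cl(Z_1\cup Z_2)$.

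\textbf{\C1.} The minimal cyclic flat $O_\LL$ consists exactly of the loops of $\M$, so $\lambda(O_\LL)=f(O_\LL)=0$, i.e.\ $\lambda$ is pointed.

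\textbf{\C*.} Let $Z_1<Z_2$ be cyclic flats. Since $Z_1$ is a flat, any proper extension has strictly larger rank, so $\lambda(Z_2)-\lambda(Z_1)=f(Z_2)-f(Z_1)>0$. For the upper bound, since $Z_1$ is the maximal cyclic flat inside the flat $Z_1$ and $Z_1\subseteq Z_1\subseteq Z_2$ — more to the point, apply Lemma~\ref{lemma:cyclic-flat} with $F=Z_2$: the maximal cyclic flat $C$ of $Z_2$ equals $Z_2$ itself (as $Z_2$ is cyclic), so taking $A=Z_2$ gives nothing new; instead use the lemma with $F=Z_2$ and $A=Z_1\cup(\text{something})$. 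The clean argument: pick $x\in Z_2-Z_1$; since $Z_1$ is cyclic and $x\notin Z_1$, monotonicity of flats forces $x$ to lie below $Z_2$, and because $Z_2$ is cyclic, if $x$ is not a loop then $f(x\mid Z_2-x)<f(x)=\mu(x)$; summing such strict inequalities along a maximal chain from $Z_1$ to $Z_2$ (discarding loops, which contribute $0$ to both sides) yields $\lambda(Z_2)-\lambda(Z_1)<\mu(Z_2-Z_1)$. (If every element of $Z_2-Z_1$ were a loop it would already lie in $O_\LL\subseteq Z_1$, so $Z_2-Z_1$ contains a non-loop and the inequality is strict.)

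\textbf{\C4.} If $a\in Z\in\LL$, then $\mu(a)=f(a)\le f(Z)=\lambda(Z)$ by monotonicity of $f$.

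\textbf{\C5.} For (a): if $Z\neq O_\LL$ then $Z$ properly contains the flat $O_\LL$, hence $f(Z)>f(O_\LL)=0$, so $\lambda(Z)>0$. For (b): $O_\LL$ is the set of loops, so $a\notin O_\LL$ means $a$ is not a loop, i.e.\ $\mu(a)=f(a)>0$.

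\textbf{\C3.} This is the main point. Let $C_1,C_2\in\LL$; by Remark~\ref{remark:1} we may assume they are incomparable. Write $J=C_1\lor C_2=\cl(C_1\cup C_2)$ and $N=C_1\land C_2$, the maximal cyclic flat inside the flat $C_1\cap C_2$. Submodularity of $f$ gives
$$
   f(C_1)+f(C_2)\ge f(C_1\cup C_2)+f(C_1\cap C_2).
$$
Since $C_1\cup C_2\subseteq J=\cl(C_1\cup C_2)$ and both have the same rank, $f(C_1\cup C_2)=f(J)=\lambda(C_1\lor C_2)$. For the intersection term, apply Lemma~\ref{lemma:cyclic-flat} to the flat $F=C_1\cap C_2$ with its maximal cyclic flat $N$ and $A=C_1\cap C_2$:
$$
   f(C_1\cap C_2)=f(N)+\mu_f\big(C_1\cap C_2-N\big)=\lambda(C_1\land C_2)+\mu(C_1\cap C_2-C_1\land C_2).
$$
Substituting both identities into the submodular inequality yields exactly \C3.
\end{proof}

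The step I expect to be the main obstacle is the upper bound in \C* (and the essentially identical intersection computation feeding \C3): one must be careful that Lemma~\ref{lemma:cyclic-flat} is being invoked on the right flat with the right cyclic flat inside it, and that the strictness in \C* genuinely survives — which it does precisely because $Z_2-Z_1$ cannot consist only of loops, as all loops already sit in $O_\LL\subseteq Z_1$. Everything else is a direct unwinding of definitions.
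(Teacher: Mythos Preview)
Your arguments for \C1, \C4, \C5, and \C3 are correct and essentially identical to the paper's (the paper likewise invokes Lemma~\ref{lemma:cyclic-flat} on the flat $Z_1\cap Z_2$ with its maximal cyclic flat $Z_1\land Z_2$ to handle \C3). One small slip: your opening plan says ``meet is intersection,'' which is false in $\LL$; fortunately your proof body corrects this.

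The genuine gap is in the strict upper bound of \C*. The inequalities you invoke from cyclicity are $f(x\mid Z_2{-}x)<\mu(x)$, but these do \emph{not} telescope to $f(Z_2)-f(Z_1)$. If $Z_1=A_0\subset A_1\subset\cdots\subset A_k=Z_2$ is a maximal chain with $A_j=A_{j-1}\cup\{x_j\}$, then
\[
   f(Z_2)-f(Z_1)=\sum_{j=1}^k f(x_j\mid A_{j-1}),
\]
and since $A_{j-1}\subseteq Z_2{-}x_j$, submodularity gives $f(x_j\mid A_{j-1})\ge f(x_j\mid Z_2{-}x_j)$. So summing the cyclic inequalities bounds $f(Z_2)-f(Z_1)$ from \emph{below}, not above, and your conclusion does not follow. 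The easy repair: each increment satisfies $f(x_j\mid A_{j-1})\le f(x_j)=\mu(x_j)$ by submodularity alone (giving the non-strict bound), and only the \emph{last} increment equals $f(x_k\mid Z_2{-}x_k)$; order the chain so that $x_k$ is a non-loop (you correctly note one exists), and cyclicity of $Z_2$ makes that single term strict.

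The paper takes a different route for \C*: it argues by contradiction. Assuming $f(Z_2)-f(Z_1)=\mu(Z_2{-}Z_1)$, it picks any $a\in Z_2{-}Z_1$, uses $f(Z_1)+\mu(Z_2{-}Z_1{-}a)\ge f(Z_2{-}a)$ together with $f(a)+f(Z_2{-}a)\ge f(Z_2)$ to force $f(Z_2)=f(Z_2{-}a)+f(a)$, contradicting that $Z_2$ is cyclic.
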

\begin{proof}
The minimal cyclic flat $O_\LL$ is the set of loops, i.e., elements with
rank zero. This proves \C1 and \C5; \C4 is a simple consequence of submodularity.

If $Z_1<Z_2$ then $f(Z_1)<f(Z_2)$ as $Z_1$ is a flat and $Z_2$ is a proper 
extension of $Z_1$. Suppose by contradiction that
$f(Z_2)-f(Z_1)=\mu(Z_2{-}Z_2)$, and let $a\in Z_2{-}Z_1$.
Then $f(Z_1)+\mu(Z_2{-}Z_1a)\ge f(Z_2{-}a)$ by submodularity, and
$$
   f(a)+f(Z_2{-}a)\ge f(Z_2)=f(Z_1)+\mu(Z_2{-}Z_1)\ge f(a)+f(Z_2{-}a),
$$
which means $f(Z_2)=f(Z_2{-}a)+f(a)$, thus $Z_2$ is not cyclic, proving \C*.

Finally, \C3 follows from Lemma \ref{lemma:cyclic-flat}, since
$Z_1\lor Z_2=\cl(Z_1\cup Z_2)$, the ranks are equal $f(Z_1\lor
Z_2)=f(Z_1\cup Z_2)$. Let $F=Z_1\cap Z_2$; it is a flat, and $C=Z_1\land Z_2$
is the maximal cyclic flat contained in $F$. By Lemma
\ref{lemma:cyclic-flat}
$$
   f(F)=f(Z_1\cap Z_2) = f(C) + \mu_f(F{-}C).
$$
Combining these with the submodularity
$$
   f(Z_1)+f(Z_2) \ge f(Z_1\cup Z_2)+f(Z_1\cap Z_2)
$$
we get the inequality in \C3.
\end{proof}

\begin{claim}\label{claim:conditions-sufficient}
Suppose the ranked lattice $(\lambda,\LL)$ and the measure $\mu$ satisfy
conditions \C1, \C*, \C3, \C4 and \C5. The convolution $\lambda*\mu$
recovers a polymatroid $\M$ such that
$(\lambda,\LL)$ is the lattice of cyclic flats of $\M$ endowed
with the polymatroid rank, and $\mu=\mu_\M$.
\end{claim}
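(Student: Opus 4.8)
The plan is to analyse the function $r=\lambda*\mu$ defined by the convolution formula \eqref{eq:convolution} and show, step by step, that it is a polymatroid whose cyclic flats reconstruct $(\lambda,\LL)$ and whose singleton ranks reconstruct $\mu$. First I would verify the three polymatroid axioms for $r$. Non-negativity is immediate since $\lambda\ge 0$ and $\mu\ge 0$. Monotonicity: given $A\subseteq B$, pick $Z$ achieving the minimum for $A$; then $\lambda(Z)+\mu(B{-}Z)\ge\lambda(Z)+\mu(A{-}Z)=r(A)$ because $\mu$ is additive and $A{-}Z\subseteq B{-}Z$, so $r(B)\le\lambda(Z)+\mu(B{-}Z)$ gives the wrong direction — instead one shows $r(B)\ge r(A)$ by taking $Z'$ optimal for $B$ and using $\lambda(Z'\land\cl\text{-type argument})$; the clean way is the standard fact that convolution of a monotone function with a measure is monotone, using \C2 (which follows from \C*). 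Submodularity is the substantive axiom: here I expect to need \C3 in an essential way, together with the lattice structure, to combine the optimal $Z_1$ for $A$ and $Z_2$ for $B$ into witnesses $Z_1\lor Z_2$ for $A\cup B$ and $Z_1\land Z_2$ for $A\cap B$, and then \C3 supplies exactly the correction term $\mu(Z_1\cap Z_2{-}Z_1\land Z_2)$ needed to absorb the discrepancy between the set-theoretic and lattice meets. This submodularity step is the main obstacle.

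Once $r$ is shown to be a polymatroid, call it $\M$, the second task is to identify its cyclic flats. I would first show every $Z\in\LL$ is a flat of $\M$: using \C* one shows that any proper extension $ZA$ with $A\not\subseteq Z$ has $r(ZA)>r(Z)$, because the optimal witness for $ZA$ is $Z$ itself (any larger lattice element $Z'>Z$ costs at least $\lambda(Z)+$ the strict excess from \C*, which outweighs what it saves on $\mu$), so $r(ZA)=\lambda(Z)+\mu(A{-}Z)>\lambda(Z)=r(Z)$ using \C5b) for elements outside $O_\LL$. Next, that each $Z\in\LL$ is cyclic: for $i\in Z$ not a loop, $r(Z)-r(Z{-}i)<\mu(i)=f(i)$, again a consequence of \C* applied to the lattice element $Z$ versus whatever element is optimal for $Z{-}i$. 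Conversely I must show there are no other cyclic flats; for this I would take an arbitrary flat $F$ of $\M$, invoke Lemma \ref{lemma:cyclic-flat} to get its maximal cyclic flat inside $\M$, and show by a minimality/convolution argument that this cyclic flat lies in $\LL$ — essentially that the $Z$ achieving the minimum in $r(F)$ is itself a flat, is cyclic, and equals the maximal cyclic flat. Finally, $\mu=\mu_\M$: I would check $r(a)=\min\{\lambda(Z)+\mu(a{-}Z)\}$ equals $\mu(a)$ for each $a$, which holds because taking $Z=O_\LL$ gives $\lambda(O_\LL)+\mu(a)=\mu(a)$ by \C1, while any $Z\ni a$ gives $\lambda(Z)\ge\mu(a)$ by \C4, so the minimum is $\mu(a)$. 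Uniqueness of $\M$ is already granted by Corollary \ref{corr:unique}, and the integrality statement is immediate from the convolution formula, so the remaining work is confined to the polymatroid axioms and the "no extra cyclic flats" direction — the latter being where Lemma \ref{lemma:cyclic-flat} does the heavy lifting and \C5 is needed to rule out degenerate lattice elements masquerading as flats.

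In carrying this out I would lean on the structural claims and lemmas promised from Section \ref{sec:proof}, isolating each implication "condition $\Rightarrow$ property" so that the roles of \C1 through \C5 are transparent: \C1 anchors $r(a)$ and $r(\emptyset)=0$; \C2/\C* give monotonicity and strictness of flats; \C3 gives submodularity; \C4 guarantees $\mu(a)$ is recovered correctly on singletons; and \C5 prevents spurious flats and loops from collapsing the lattice. The one genuinely delicate point, beyond bookkeeping, is reconciling $\land$ with $\cap$ throughout — both in the submodularity proof and in showing the optimal witness of a flat is a flat — and \C3 is precisely engineered for that reconciliation.
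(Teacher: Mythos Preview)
Your plan matches the paper's architecture and is essentially correct: verify the polymatroid axioms for $r$ (Theorem~\ref{thm:convolution}), show each $Z\in\LL$ is a cyclic flat (Claim~\ref{claim:LL-to-conv}), show there are no others (Claim~\ref{claim:conv-to-LL}), and recover $\mu$ on singletons (Lemma~\ref{lemma:zero-min-LL}). Two small corrections are worth making. First, monotonicity of $r$ needs nothing beyond the definition: with $Z'$ optimal for $B\supseteq A$ one has $r(A)\le\lambda(Z')+\mu(A{-}Z')\le\lambda(Z')+\mu(B{-}Z')=r(B)$, so the paper obtains all three polymatroid axioms from \C3 alone, without invoking \C2. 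Second, and more substantively, your route through Lemma~\ref{lemma:cyclic-flat} for the ``no extra cyclic flats'' direction does not quite land: if $F$ is itself a cyclic flat of $\M$, Lemma~\ref{lemma:cyclic-flat} only says that $F$ is the maximal cyclic flat inside $F$, which is vacuous, and the assertion ``the optimal $Z$ equals the maximal cyclic flat'' still needs an independent argument. The paper instead argues directly (Claim~\ref{claim:conv-to-LL} via Lemma~\ref{lemma:pre-flat}): if $F$ is a flat with optimal witness $Z$, then $Z\subseteq F$, and for any $a\in F{-}Z$ one gets $r(F)-r(F{-}a)=\mu(a)=r(a)$, so such an $F$ cannot be cyclic; hence every cyclic flat coincides with its optimal witness and lies in~$\LL$. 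This bypasses Lemma~\ref{lemma:cyclic-flat} entirely and is where \C1 and \C5 actually enter.
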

\begin{proof}
By Theorem \ref{thm:convolution} the convolution is a polymatroid.
$\mu=\mu_\M$ follows from Lemma \ref{lemma:zero-min-LL} c). By Claims
\ref{claim:conv-to-LL} and \ref{claim:LL-to-conv} elements of the lattice
$\LL$ are precisely the cyclic flats of $\M$. Finally, Claim
\ref{claim:r-lambda-equal} says that lattice and polymatroid ranks are
equal.
\end{proof}

\section{Convolution properties}\label{sec:proof}

In this section $(\lambda,\LL)$ is a fixed ranked lattice and $\mu$ is a
measure both defined on subsets of $M$. The convolution function
defined on subsets of $M$ will be denoted by $r$:
\begin{equation}\label{eq:conv-again}
    r: A\mapsto \min \{ \lambda(Z)+\mu(A{-}Z\,:\, Z\in\LL \}.
\end{equation}

\begin{theorem}\label{thm:convolution}
If \C3 holds, then $(r,M)$ is a polymatroid.
\end{theorem}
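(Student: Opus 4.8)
The plan is to verify the three defining properties of a polymatroid for $r=\lambda*\mu$ directly from the definition in (\ref{eq:conv-again}): non-negativity, monotonicity, and submodularity. Non-negativity is immediate, since $\lambda$ takes non-negative values and $\mu$ is a non-negative measure, so every term $\lambda(Z)+\mu(A{-}Z)$ in the minimum is non-negative; hence $r(A)\ge 0$. Monotonicity is also easy: if $A\subseteq B$ and $Z\in\LL$ achieves the minimum for $B$, then $\lambda(Z)+\mu(A{-}Z)\le \lambda(Z)+\mu(B{-}Z)=r(B)$ because $A{-}Z\subseteq B{-}Z$ and $\mu$ is monotone; taking the minimum over $Z$ on the left gives $r(A)\le r(B)$. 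Note these two steps do not even use \C3.

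The real content is submodularity: $r(A)+r(B)\ge r(A\cup B)+r(A\cap B)$ for all $A,B\subseteq M$. First I would pick witnesses: let $Z_1\in\LL$ attain the minimum defining $r(A)$ and $Z_2\in\LL$ attain the minimum defining $r(B)$, so $r(A)=\lambda(Z_1)+\mu(A{-}Z_1)$ and $r(B)=\lambda(Z_2)+\mu(B{-}Z_2)$. The natural candidates to bound $r(A\cup B)$ and $r(A\cap B)$ from above are $Z_1\lor Z_2$ and $Z_1\land Z_2$ respectively: thus $r(A\cup B)\le \lambda(Z_1\lor Z_2)+\mu\big((A\cup B){-}(Z_1\lor Z_2)\big)$ and $r(A\cap B)\le \lambda(Z_1\land Z_2)+\mu\big((A\cap B){-}(Z_1\land Z_2)\big)$. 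Adding these and invoking \C3 in the form $\lambda(Z_1)+\lambda(Z_2)\ge \lambda(Z_1\lor Z_2)+\lambda(Z_1\land Z_2)+\mu(Z_1\cap Z_2{-}Z_1\land Z_2)$, it remains to show the purely measure-theoretic inequality
$$
\mu(A{-}Z_1)+\mu(B{-}Z_2)\ge \mu\big((A\cup B){-}(Z_1\lor Z_2)\big)+\mu\big((A\cap B){-}(Z_1\land Z_2)\big)-\mu(Z_1\cap Z_2{-}Z_1\land Z_2),
$$
after which the two inequalities combine to give $r(A)+r(B)\ge r(A\cup B)+r(A\cap B)$.

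The main obstacle is therefore this last set-theoretic bookkeeping with $\mu$. Since $\mu$ is additive, it suffices to check the corresponding inclusion of multisets, i.e. that the indicator-function inequality
$$
\mathbf 1_{A-Z_1}+\mathbf 1_{B-Z_2}+\mathbf 1_{Z_1\cap Z_2-Z_1\land Z_2}\ \ge\ \mathbf 1_{(A\cup B)-(Z_1\lor Z_2)}+\mathbf 1_{(A\cap B)-(Z_1\land Z_2)}
$$
holds pointwise on $M$. I would verify this by a routine case analysis on the membership pattern of a fixed element $x$ in $A$, $B$, $Z_1$, $Z_2$ (and, where it matters, in $Z_1\lor Z_2$ and $Z_1\land Z_2$), using only the containments $Z_1\land Z_2\subseteq Z_1\cap Z_2\subseteq Z_1\lor Z_2$ and $Z_i\subseteq Z_1\lor Z_2$. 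The cases split naturally according to whether $x\in A\cap B$, $x$ is in exactly one of $A,B$, or $x\notin A\cup B$; in each the right-hand side contributes at most what the left-hand side does, the term $\mathbf 1_{Z_1\cap Z_2-Z_1\land Z_2}$ being exactly what is needed to absorb the defect when $x$ lies between the meet and the intersection. Once this pointwise inequality is in hand, summing over $x\in M$ gives the displayed $\mu$-inequality, and the proof of submodularity—and hence of the theorem—is complete.
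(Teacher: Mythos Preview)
Your proposal is correct and follows essentially the same route as the paper: non-negativity and monotonicity are handled identically, and for submodularity you pick witnesses $Z_1,Z_2$, bound $r(A\cup B)$ and $r(A\cap B)$ using $Z_1\lor Z_2$ and $Z_1\land Z_2$, apply \C3, and reduce to a purely set-theoretic $\mu$-inequality verified by an element-by-element case check. The only cosmetic difference is that the paper factors out a clean intermediate inequality $\mu(A{-}Z_A)+\mu(B{-}Z_B)\ge \mu(A\cap B{-}Z_A\cap Z_B)+\mu(A\cup B{-}Z_A\cup Z_B)$ and then bridges from $Z_1\cap Z_2,\,Z_1\cup Z_2$ to $Z_1\land Z_2,\,Z_1\lor Z_2$ in two short steps, whereas you verify the full indicator inequality in one shot.
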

\begin{proof}
First observe that for arbitrary subsets $A,B,Z_A,Z_B$ of $M$ we have
\begin{align}\label{eq:basic-ineq}
  &\mu(A{-}Z_A)+\mu(B{-}Z_B)\ge {} \\
  &~~~~\mu(A\cap B{-}Z_A\cap Z_B) + \mu(A\cup B{-}Z_A\cup Z_B)\nonumber
\end{align}
Indeed, if $i\in A\cap B{-}Z_A\cap Z_B$ then $i$ is in both $A$ and $B$ and
not in either $Z_A$ or $Z_B$, thus it is in either $A{-}Z_A$ or
$B{-}Z_B$. If $i\in A\cup B{-}Z_A\cup Z_B)$, then $i$ is in neither $Z_A$
nor $Z_B$, thus, again, it is in either $A{-}Z_A$ or $B{-}Z_B$. Finally, if
$i$ is in both sets on the right hand side of (\ref{eq:basic-ineq}), then 
$i$ is in both $A$ and $B$, and not in $Z_A$ neither in $Z_B$, thus $i$ is in 
both sets on the left hand side.

\smallskip

The convolution $(r,M)$ is a polymatroid if $r$ is non-negative, monotone,
and submodular. Non-negativity is clear from the definition
(\ref{eq:conv-again}) as both $\lambda$ and $\mu$ are non-negative. Let $A$
and $B$ be subsets of $M$; $r(A)=\lambda(Z_A)+\mu(Z_A{-}A)$ and
$r(B)=\lambda(Z_B)+\mu(Z_B{-}B)$. If $A\subseteq B$ then
$$
   r(A)\le \lambda(Z_B)+\mu(A{-}Z_B)\le\lambda(Z_B)+\mu(B{-}Z_B)=r(B),
$$
showing monotonicity. To check submodularity we use $Z_A\land Z_B$ and
$Z_A\lor Z_B$ to estimate $r(A\cap B)$ and $r(A\cup B)$, respectively, as 
follows:
\begin{align*}
   r(A\cap B) &\le \lambda(Z_A\land Z_B)+\mu(A\cap B{-}Z_A\land Z_B), \\
   r(A\cup B) &\le \lambda(Z_A\lor Z_B)+\mu(A\cup B{-}Z_A\lor Z_B).R
\end{align*}
Using condition \C3, the submodularity $r(A)+r(B)\ge r(A\cup B)+r(A\cap B)$ 
follows if
\begin{align*}
   &\mu(A{-}Z_A)+\mu(B{-}Z_B)+\mu(Z_A\cap Z_B{-}Z_A\land Z_B) \ge{} \\
   &~~~~~~~~~~~~\ge \mu(A\cap B{-}Z_A\land Z_B)+\mu(A\cup B{-}Z_A\lor Z_B).
\end{align*}
As 
$$
   \mu(A\cap B{-}Z_A\land Z_B)\le \mu(A\cap B{-}Z_A\cap Z_B)+
     \mu(Z_A\cap Z_B{-}Z_A\land Z_B)
$$
(the right hand side is a disjoint union), and
$$
  \mu(A\cup B{-}Z_A\lor Z_B) \le \mu(A\cup B{-} Z_A\cup Z_B)
$$
as $Z_A\cup Z_B$ is a subset of $Z_A\lor Z_B$,
(\ref{eq:basic-ineq}) gives the required inequality.
\end{proof}

\begin{lemma}\label{lemma:pre-flat}
Suppose $a\notin A$ and $A$ is disjoint from $Z\in\LL$. If
$r(aAZ)=\lambda(Z)+\mu(aA)$, then $r(aAZ)=r(AZ)+\mu(a)$.
\end{lemma}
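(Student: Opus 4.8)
The plan is to prove the equality by establishing the two inequalities $r(aAZ)\le r(AZ)+\mu(a)$ and $r(aAZ)\ge r(AZ)+\mu(a)$ separately. The first holds for any $a$, $A$, $Z$ with no hypothesis at all; the second is where the assumption $r(aAZ)=\lambda(Z)+\mu(aA)$ is used.

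First I would record the ``free'' direction. Pick $W\in\LL$ attaining $r(AZ)=\lambda(W)+\mu(AZ{-}W)$. Since $aAZ{-}W=(AZ{-}W)\cup(\{a\}{-}W)$, subadditivity of $\mu$ gives $\mu(aAZ{-}W)\le\mu(AZ{-}W)+\mu(a)$, and feeding the same $W$ into the minimum defining $r(aAZ)$ yields $r(aAZ)\le\lambda(W)+\mu(aAZ{-}W)\le r(AZ)+\mu(a)$. (This is essentially submodularity of the convolution, available from Theorem~\ref{thm:convolution}, but the elementary bound straight from (\ref{eq:conv-again}) suffices and uses no condition.)

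For the reverse direction I would use $Z$ itself as a feasible lattice element in the convolution for the set $AZ$. Because $A$ is disjoint from $Z$ we have $AZ{-}Z=A$, hence $r(AZ)\le\lambda(Z)+\mu(A)$. Adding $\mu(a)$ and using $a\notin A$, so that $\mu(A)+\mu(a)=\mu(aA)$, together with the hypothesis $r(aAZ)=\lambda(Z)+\mu(aA)$, gives $r(AZ)+\mu(a)\le\lambda(Z)+\mu(aA)=r(aAZ)$. Combining the two inequalities yields the claimed equality.

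There is no real obstacle here; the proof is a short manipulation of the $\min$-definition. The only points that need care are the elementary set arithmetic -- the decomposition of $aAZ{-}W$ and the identity $AZ{-}Z=A$ -- and the observation that the hypothesis is used only in the nontrivial direction. The degenerate case $a\in Z$ is harmless: then $aAZ=AZ$, and the hypothesis forces $\lambda(Z)+\mu(a)+\mu(A)\le\lambda(Z)+\mu(A)$, i.e.\ $\mu(a)=0$, so the identity holds trivially.
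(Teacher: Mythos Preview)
Your proof is correct and uses the same ingredients as the paper's: the bound $r(AZ)\le\lambda(Z)+\mu(A)$ obtained by testing $Z$, and a comparison via a minimizer for $r(AZ)$. The paper organizes these slightly differently---it proves the intermediate equality $r(AZ)=\lambda(Z)+\mu(A)$ by combining the two inequalities into a pure $\mu$-inequality that must collapse to equality---whereas you split the target identity directly into two one-line inequalities; your arrangement is a bit more transparent, while the paper's extracts the stronger statement $r(AZ)=\lambda(Z)+\mu(A)$ explicitly.
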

\begin{proof}
We show that $r(AZ)=\lambda(Z)+\mu(A)$, from here the claim follows. Let
$r(AZ)=\lambda(Z')+\mu(AZ{-}Z')$ for some $Z'\in\LL$. Then $r(aAZ)$ is
minimal, thus
$$
 \lambda(Z)+\mu(aA) = r(aAZ)\le \lambda(Z')+\mu(aAZ{-}Z').
$$
Similarly, $r(AZ)$ is minimal, thus
\begin{equation}\label{eq:lemma-4}
   \lambda(Z')+\mu(AZ{-}Z') = r(AZ) \le 
   \lambda(Z)+\mu(AZ{-}Z) = \lambda(Z)+\mu(A)
\end{equation}
as $AZ{-}Z=A$ by  assumption. Combining them we get
$$
    \mu(aA)+\mu(AZ{-}Z') \le \mu(aAZ{-}Z')+\mu(A),
$$
which holds only if they are equal.
Consequently we have equality in
(\ref{eq:lemma-4}) as was required.
\end{proof}

\begin{lemma}\label{lemma:zero-min-LL}
{\upshape a)} \C1 implies $r(a)=0$ for $a\in O_\LL$, and $r(a)\le \mu(a)$
otherwise.
{\upshape b)} \C4 implies $r(a)\ge\mu(a)$ for all $a\in M$.
{\upshape c)} \C5 implies $r(a)>0$ for $a\notin O_\LL$.
\end{lemma}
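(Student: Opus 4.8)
The plan is to verify each of the three parts of Lemma~\ref{lemma:zero-min-LL} directly from the convolution formula \eqref{eq:conv-again}, using the relevant condition in each case. Throughout, recall that $r(a)=\min\{\lambda(Z)+\mu(\{a\}{-}Z):Z\in\LL\}$, where the term $\mu(\{a\}{-}Z)$ equals $\mu(a)$ if $a\notin Z$ and $0$ if $a\in Z$.

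\smallskip
For part a), assume \C1, i.e.\ $\lambda(O_\LL)=0$. If $a\in O_\LL$, then taking $Z=O_\LL$ in the minimum gives $r(a)\le\lambda(O_\LL)+\mu(\emptyset)=0$; since $r$ is non-negative (both $\lambda$ and $\mu$ are), $r(a)=0$. If $a\notin O_\LL$, then still taking $Z=O_\LL$ gives $r(a)\le\lambda(O_\LL)+\mu(\{a\}{-}O_\LL)=\mu(a)$, using Remark~\ref{remark:3} or just additivity of $\mu$ together with $\mu(a\cap O_\LL)\ge 0$; more simply, $\mu(\{a\}{-}O_\LL)\le\mu(a)$ since $\mu$ is non-negative and additive. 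This gives $r(a)\le\mu(a)$.

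\smallskip
For part b), assume \C4: whenever $a\in Z\in\LL$ we have $\mu(a)\le\lambda(Z)$. Let $Z\in\LL$ be arbitrary and estimate the corresponding term in the minimum. If $a\notin Z$, then $\lambda(Z)+\mu(\{a\}{-}Z)=\lambda(Z)+\mu(a)\ge\mu(a)$ since $\lambda\ge 0$. If $a\in Z$, then $\lambda(Z)+\mu(\{a\}{-}Z)=\lambda(Z)+0=\lambda(Z)\ge\mu(a)$ by \C4. So every term in the minimum defining $r(a)$ is at least $\mu(a)$, hence $r(a)\ge\mu(a)$.

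\smallskip
For part c), assume \C5, and suppose $a\notin O_\LL$; we must show $r(a)>0$. Pick $Z\in\LL$ attaining the minimum, so $r(a)=\lambda(Z)+\mu(\{a\}{-}Z)$. If $a\in Z$, then $Z\ne O_\LL$ (since $a\notin O_\LL$ but $a\in Z$), so $\lambda(Z)>0$ by \C5\,a), whence $r(a)\ge\lambda(Z)>0$. If $a\notin Z$, then $\mu(\{a\}{-}Z)=\mu(a)>0$ by \C5\,b), whence $r(a)\ge\mu(a)>0$. Either way $r(a)>0$. I expect no real obstacle here; the only point requiring a little care is making sure the set difference $\{a\}{-}Z$ is handled correctly in each case split, and in part a) that one does not need the full strength of Remark~\ref{remark:3}—non-negativity and additivity of $\mu$ suffice for the bound $r(a)\le\mu(a)$.
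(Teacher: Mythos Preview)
Your proof is correct and follows exactly the approach the paper intends: the paper's own proof simply states that the result is ``immediate from the conditions and from the fact that $r(a)$ is the minimum of $\lambda(Z)+\mu(a{-}Z)$ as $Z$ runs over $\LL$'', and you have carefully unpacked that immediacy case by case. One very minor comment: in part~a) for $a\notin O_\LL$ you have $\{a\}{-}O_\LL=\{a\}$ exactly, so $\mu(\{a\}{-}O_\LL)=\mu(a)$ on the nose; the appeal to Remark~\ref{remark:3} or to the inequality $\mu(\{a\}{-}O_\LL)\le\mu(a)$ is unnecessary.
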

\begin{proof}
Immediate from the conditions and from the fact that $r(a)$ is the minimum
of $\lambda(Z)+\mu(a{-}Z)$ as $Z$ runs over $\LL$.
\end{proof}

\begin{lemma}\label{lemma:a-or-z}
{\upshape a)} Assume \C2 and \C3.
For every pair of lattice elements $A,Z\in\LL$ we have
$$
   \lambda(A\lor Z)\le \lambda(Z)+\mu(A{-}Z).
$$
{\upshape b)} Assume \C* and \C3.
For every pair $A,Z\in\LL$ such that $A$ is not below $Z$  the
above inequality is strict.
\end{lemma}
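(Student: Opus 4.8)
The plan is to prove both parts by induction on the ``distance'' between $A$ and $Z$ in the lattice, peeling off join-irreducible pieces one at a time. Concretely, I would argue by induction on the number of lattice elements strictly between $A\land Z$ and $A$ (equivalently, on the height of $A$ above $A\land Z$ in the interval $[A\land Z, A]$). The base case is when $A\le Z$: then $A\lor Z=Z$, $\mu(A{-}Z)\ge 0$, and the inequality in a) reads $\lambda(Z)\le\lambda(Z)$, which is trivial; for b) the hypothesis ``$A$ not below $Z$'' excludes this case, so there is nothing to check.

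For the inductive step in a), pick $A'\in\LL$ with $A\land Z\le A'< A$ and $A'$ maximal with this property (so $A$ covers $A'$ in the interval, or at least $A'$ is an immediate predecessor of $A$ inside $[A\land Z,A]$). Apply the induction hypothesis to the pair $A',Z$ to get $\lambda(A'\lor Z)\le\lambda(Z)+\mu(A'{-}Z)$. Then I would feed $A$ and $A'\lor Z$ into condition \C3: since $A\land(A'\lor Z)$ sits between $A'$ and $A$ but is not $A$ (because $A\not\le Z$ forces $A'\lor Z\not\ge A$, one needs a small argument here), modularity-type bookkeeping with \C3 together with the monotonicity \C2 to control the correction term $\mu\bigl(A\cap(A'\lor Z){-}A\land(A'\lor Z)\bigr)$ should yield
$$
   \lambda(A\lor Z)\le \lambda(A'\lor Z)+\mu\bigl(A{-}(A'\lor Z)\bigr)\le
   \lambda(Z)+\mu(A'{-}Z)+\mu\bigl(A{-}A'{-}Z\bigr),
$$
and the last two $\mu$-terms add up to at most $\mu(A{-}Z)$ since $(A'{-}Z)$ and $(A{-}A'{-}Z)$ are disjoint subsets of $A{-}Z$ (here $A'\subseteq A$ is used). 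That gives part a). An alternative, possibly cleaner, route is to apply \C3 directly to $A$ and $Z$ and then bound $\lambda(A\land Z)$ from below using \C2 along a maximal chain from $O_\LL$ or from $A\land Z$ up to $A$; I would try whichever makes the correction term $\mu(A\cap Z{-}A\land Z)$ cancel most transparently.

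For part b), the idea is the same induction but now tracking strict inequality. The key observation is that if $A\not\le Z$ then somewhere along the chain we encounter a strict step $A'\lor Z < A\lor(A'\lor Z)$ or a step where $A'<A$ with $A'\subsetneq A$ contributing a nonempty set to which \C* applies; \C* upgrades one of the $\le$'s used in part a) to a strict $<$. More carefully: either $A\lor Z > Z$ strictly (which happens exactly when $A\not\le Z$), in which case applying \C* to the comparable pair $Z < A\lor Z$ gives $\lambda(A\lor Z)-\lambda(Z) < \mu(A\lor Z{-}Z)$, and then I bound $\mu(A\lor Z{-}Z)\le\mu(A{-}Z)$ — but wait, this containment can fail, so I would instead combine \C* on the top step with the (non-strict) a)-inequality on the lower steps. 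The main obstacle, and the step I expect to need the most care, is precisely handling the gap between the join $A\lor Z$ and the union $A\cup Z$: \C* gives strictness in terms of $\mu(Z_2{-}Z_1)$ for \emph{lattice} elements, whereas the target inequality is in terms of $\mu(A{-}Z)$, and these differ by exactly the ``non-modularity'' of the pair. I expect condition \C3 — whose whole point is to correct submodularity for the discrepancy between intersection and meet — to be exactly what reconciles the two, but getting the bookkeeping of which $\mu$-terms are disjoint and which containments hold will be the delicate part of the write-up.
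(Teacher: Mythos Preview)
Your induction is an unnecessary detour, and the ``alternative'' you mention in passing is in fact the paper's proof --- but you have not seen how cleanly it closes, and your treatment of part b) never lands on the correct step to make strict.

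The paper argues by a direct three-case split with no induction. If $A\le Z$ the two sides are equal. If $Z<A$ then $A\lor Z=A$ and the claim is literally \C2 (respectively \C*) applied to the comparable pair $Z<A$. If $A$ and $Z$ are incomparable, apply \C2 \emph{once} to the comparable pair $A\land Z\le A$ to get
$$
  \lambda(A)\le\lambda(A\land Z)+\mu(A{-}A\land Z),
$$
and apply \C3 \emph{once} to $A,Z$ to get
$$
  \lambda(A\land Z)+\lambda(A\lor Z)\le\lambda(A)+\lambda(Z)-\mu(A\cap Z{-}A\land Z).
$$
Adding these two lines gives the result, because the $\mu$-bookkeeping you were worried about is the exact identity
$$
  \mu(A{-}A\land Z)-\mu(A\cap Z{-}A\land Z)=\mu(A{-}Z),
$$
which holds since $A{-}A\land Z$ is the disjoint union of $A{-}Z$ and $A\cap Z{-}A\land Z$ (recall $A\land Z\subseteq A\cap Z\subseteq A$). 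No chains, no intermediate $A'$, no recursion.

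For b), the single place where strictness enters is the first displayed line above: when $A$ is not below $Z$ and the pair is incomparable, we have $A\land Z<A$ strictly, so replacing \C2 by \C* there makes that inequality strict and the sum strict. Your attempt to extract strictness from the pair $Z<A\lor Z$ fails for exactly the reason you noticed --- $\mu\bigl((A\lor Z){-}Z\bigr)$ need not be bounded by $\mu(A{-}Z)$ since the join can properly contain $A\cup Z$ --- and it cannot be rescued. The correct comparable pair is $A\land Z<A$, not $Z<A\lor Z$.
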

\begin{proof}
If $A\le Z$ then $A\lor Z=Z$, thus the two sides are equal. When $Z<A$ then
the inequality (strict inequality) follows from condition \C2 (condition
\C*, respectively). Finally, if $A$ and $Z$ are incomparable, then apply
\C2 (or \C*) for $A\land Z$ and $Z$,
and \C3 for $A$ and $Z$ to get
\begin{align*}
  \lambda(A) &\le \lambda(A\land Z) + \mu(A{-}A\land Z), \\[2pt]
 \lambda(A\land Z)+\lambda(A\lor Z) &\le
     \lambda(A)+\lambda(Z)-\mu(A\cap Z{-}A\land Z).
\end{align*}
Their sum is the claimed inequality. When using \C*, the first
inequality is strict, thus the sum is strict as well.
\end{proof}

\begin{claim}\label{claim:r-lambda-equal}
Assume \C2 and \C3.  $r(A)=\lambda(A)$ for every $A\in\LL$.
\end{claim}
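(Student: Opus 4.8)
The plan is to show both inequalities $r(A) \le \lambda(A)$ and $r(A) \ge \lambda(A)$ for $A \in \LL$. The first is immediate: since $A \in \LL$, taking $Z = A$ in the definition \eqref{eq:conv-again} of the convolution gives $r(A) \le \lambda(A) + \mu(A{-}A) = \lambda(A)$, using only that $\mu(\emptyset) = 0$. So the whole content of the claim is the reverse inequality, and this is where the work lies.

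For $r(A) \ge \lambda(A)$, I would argue as follows. Let $Z \in \LL$ be a lattice element realizing the minimum in the convolution, so that $r(A) = \lambda(Z) + \mu(A{-}Z)$. I want to show $\lambda(Z) + \mu(A{-}Z) \ge \lambda(A)$. The natural move is to split $\mu(A{-}Z)$ as $\mu(A{-}Z) = \mu(A \cap Z') + \mu((A{-}Z){-}Z')$ for suitable subsets, but the cleaner route is to invoke Lemma \ref{lemma:a-or-z} a). Applying that lemma to the pair $A, Z \in \LL$ (both are genuine lattice elements here, so the hypothesis is met) yields
$$
  \lambda(A \lor Z) \le \lambda(Z) + \mu(A{-}Z).
$$
Since $A \le A \lor Z$, monotonicity of $\lambda$ — which is exactly condition \C2 — gives $\lambda(A) \le \lambda(A \lor Z)$. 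Chaining these two inequalities produces $\lambda(A) \le \lambda(Z) + \mu(A{-}Z) = r(A)$, which is what we wanted.

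Putting the two halves together gives $r(A) = \lambda(A)$ for every $A \in \LL$. The only assumptions used are \C2 (for monotonicity of $\lambda$ and inside Lemma \ref{lemma:a-or-z}) and \C3 (inside Lemma \ref{lemma:a-or-z}), matching the hypotheses of the claim. There is no real obstacle: the point that could conceivably cause trouble — ensuring that the minimizing $Z$ and the target $A$ are both legitimate inputs to Lemma \ref{lemma:a-or-z} a), which requires both to lie in $\LL$ — is automatic since $A \in \LL$ by assumption and $Z$ is chosen from $\LL$. So the argument is a short two-line combination once Lemma \ref{lemma:a-or-z} is in hand; the lemma has already done the heavy lifting. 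I would write it out in roughly three sentences.
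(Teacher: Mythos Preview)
Your proof is correct and matches the paper's own argument essentially line for line: take $Z=A$ for $r(A)\le\lambda(A)$, then use monotonicity \C2 together with Lemma~\ref{lemma:a-or-z}\,a) to obtain $\lambda(A)\le\lambda(A\lor Z)\le\lambda(Z)+\mu(A{-}Z)$ for the reverse inequality. The only cosmetic difference is that the paper phrases the second step as holding for every $Z\in\LL$ rather than singling out a minimizer.
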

\begin{proof}
Using $Z=A$ in the definition $r(A)=\min\{ \lambda(Z)+\mu(A{-}Z)\}$ gives
$r(A)\le\lambda(A)$. To show the converse, condition \C2 gives $\lambda(A)\le
\lambda(A\lor Z)$, and by Lemma \ref{lemma:a-or-z} a),
$$
    \lambda(A) \le \lambda(A\lor Z)\le \lambda(Z)+\mu(A{-}Z)
$$
for every $Z\in\LL$, thus $\lambda(A)\le r(A)$.
\end{proof}

\begin{claim}\label{claim:conv-to-LL}
Assume \C1, \C3 and \C5. Every cyclic flat of
the convolution is an element of the lattice $\LL$.
\end{claim}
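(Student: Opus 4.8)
The plan is as follows. Since \C3 holds, Theorem~\ref{thm:convolution} guarantees that $(r,M)$ is a polymatroid, so ``cyclic flat of the convolution'' is meaningful. Fix such a cyclic flat $C\subseteq M$, and let $Z^*\in\LL$ attain the minimum defining $r(C)$, i.e.\ $r(C)=\lambda(Z^*)+\mu(C{-}Z^*)$. The goal is to prove $C=Z^*$, which gives $C\in\LL$. First I would record one auxiliary fact: by Lemma~\ref{lemma:zero-min-LL}\,a) and c) (which use \C1 and \C5), an element $a\in M$ is a loop of $(r,M)$ precisely when $a\in O_\LL$. Consequently every loop lies in $O_\LL$, and hence in every member of $\LL$; and every non-loop $a$ has $a\notin O_\LL$, so $r(a)\le\mu(a)$ by Lemma~\ref{lemma:zero-min-LL}\,a).

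Next I would prove $Z^*\subseteq C$ by contradiction. If some $a\in Z^*{-}C$ existed, then $r(Ca)>r(C)$ because $C$ is a flat, so $r(CZ^*)>r(C)$ by monotonicity; on the other hand, taking $Z^*$ as the witness in the convolution at $CZ^*$ gives $r(CZ^*)\le\lambda(Z^*)+\mu(CZ^*{-}Z^*)=\lambda(Z^*)+\mu(C{-}Z^*)=r(C)$, a contradiction.

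Then I would prove the reverse containment $C\subseteq Z^*$, and this is the step that really uses cyclicity. Suppose $a\in C{-}Z^*$. By the auxiliary fact $a$ is not a loop, because loops lie in $O_\LL\subseteq Z^*$; hence cyclicity of $C$ forces $r(a|C{-}a)<r(a)$. But since $a\in C{-}Z^*$, taking $Z^*$ as the witness in the convolution at $C{-}a$ yields $r(C{-}a)\le\lambda(Z^*)+\mu((C{-}a){-}Z^*)=\lambda(Z^*)+\mu(C{-}Z^*)-\mu(a)=r(C)-\mu(a)$, so $r(a|C{-}a)=r(C)-r(C{-}a)\ge\mu(a)\ge r(a)$ — contradicting $r(a|C{-}a)<r(a)$. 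Therefore $C=Z^*\in\LL$.

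The only delicate point is that cyclicity says nothing about loop elements of $C$, so the argument of the previous paragraph would collapse for them; this is exactly where \C5 is needed, since it confines all loops of $r$ to $O_\LL$, which is contained in every lattice element and in particular in $Z^*$. Everything else is a routine manipulation of the min-definition of the convolution, together with the monotonicity of $r$ (which follows from \C3 via Theorem~\ref{thm:convolution}) and the defining properties of a flat and of a cyclic flat.
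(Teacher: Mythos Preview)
Your proof is correct and follows the same overall strategy as the paper: pick a minimizer $Z^*\in\LL$ for $r(C)$, show $Z^*\subseteq C$ using that $C$ is a flat, then argue that any $a\in C{-}Z^*$ would violate cyclicity. The one notable difference is that the paper invokes Lemma~\ref{lemma:pre-flat} to obtain the \emph{equality} $r(C)-r(C{-}a)=\mu(a)$, whereas you bypass that lemma entirely and derive only the inequality $r(C)-r(C{-}a)\ge\mu(a)$ straight from the minimum definition of $r$ at $C{-}a$. Since the inequality already suffices (combined with $\mu(a)\ge r(a)>0$ from Lemma~\ref{lemma:zero-min-LL}) to contradict $r(a\mid C{-}a)<r(a)$, your route is a modest streamlining of the paper's argument; it buys you independence from Lemma~\ref{lemma:pre-flat} at no cost.
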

\begin{proof}
\C3 implies that the convolution $(r,M)$ is a polymatroid.
Suppose $F\subseteq M$ is a flat in it, and $r(F)=\lambda(Z)+\mu(F{-}Z)$
for some $Z\in\LL$. Now
$$
   r(FZ)\ge
   r(F)=\lambda(Z)+\mu(F{-}Z) = \lambda(Z)+\mu(FZ{-}Z) \ge r(FZ),
$$
consequently $r(F)=r(FZ)$. As $F$ is a flat, $Z\subseteq F$. Suppose $F{-}Z$
is not empty, let $F{-}Z=aA$ with $a\notin A$. As $r(aAZ)=\lambda(Z) +
\mu(aA)$, Lemma \ref{lemma:pre-flat} gives $r(F)-r(F{-}a)=\mu(a)$. By Lemma
\ref{lemma:zero-min-LL} a) and c) $\mu(a)\ge r(a)>0$ (and then $\mu(a)$ and
$r(a)$ must be equal), thus $F$ is not cyclic.
\end{proof}

\begin{claim}\label{claim:LL-to-conv}
Assume \C*, \C3, \C4, \C5 b). Every $Z\in\LL$ is a cyclic flat in $(r,M)$.
\end{claim}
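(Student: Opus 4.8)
The plan is to build on the convolution properties already proved. By Theorem~\ref{thm:convolution} the pair $(r,M)$ is a polymatroid, and by Claim~\ref{claim:r-lambda-equal} (whose hypothesis \C2 follows from \C*) we have $r(Z)=\lambda(Z)$ for every $Z\in\LL$. So, fixing $Z\in\LL$, it remains to check that $Z$ is a flat of $(r,M)$ and that it is cyclic there, i.e.\ that for every $i\in Z$ with $r(i)>0$ one has $r(i|Z{-}i)<r(i)$; since $r(Z)=\lambda(Z)$ the latter is the same as $\lambda(Z)<r(i)+r(Z{-}i)$.

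For the flat part I would take $a\notin Z$ and let $Z'\in\LL$ attain the minimum $r(Za)=\lambda(Z')+\mu(Za{-}Z')$. Using $\mu(Za{-}Z')=\mu(Z{-}Z')+\mu(a{-}Z')$ together with $\lambda(Z)\le\lambda(Z\lor Z')\le\lambda(Z')+\mu(Z{-}Z')$ (the first step is \C2, the second is Lemma~\ref{lemma:a-or-z} a)), two cases remain. If $a\notin Z'$ then the surviving term $\mu(a)$ is strictly positive: indeed $O_\LL\subseteq Z$ forces $a\notin O_\LL$, so \C5 b) applies, and $r(Za)\ge\lambda(Z)+\mu(a)>r(Z)$. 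If $a\in Z'$ then $Z'\ne Z$ and $Z'\not\le Z$, so either $Z<Z'$ — whence $\mu(Z{-}Z')=0$ and \C* gives $\lambda(Z')>\lambda(Z)$ — or $Z$ and $Z'$ are incomparable — whence the strict form Lemma~\ref{lemma:a-or-z} b) gives $\lambda(Z)\le\lambda(Z\lor Z')<\lambda(Z')+\mu(Z{-}Z')$. Either way $r(Za)=\lambda(Z')+\mu(Z{-}Z')>r(Z)$, so $Z$ is a flat.

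For cyclicity I would fix $i\in Z$ with $r(i)>0$. By Lemma~\ref{lemma:zero-min-LL} b) (using \C4) we have $r(i)\ge\mu(i)$. Let $Z'\in\LL$ attain $r(Z{-}i)=\lambda(Z')+\mu(Z{-}i{-}Z')$. If $i\in Z'$ then $Z{-}i{-}Z'=Z{-}Z'$, so $r(Z{-}i)=\lambda(Z')+\mu(Z{-}Z')\ge\lambda(Z\lor Z')\ge\lambda(Z)$ by Lemma~\ref{lemma:a-or-z} a) and \C2, and adding $r(i)>0$ already gives $\lambda(Z)<r(i)+r(Z{-}i)$. If $i\notin Z'$ then $Z{-}Z'$ is the disjoint union of $\{i\}$ and $Z{-}i{-}Z'$, so $\mu(i)+\mu(Z{-}i{-}Z')=\mu(Z{-}Z')$ and hence $r(i)+r(Z{-}i)\ge\lambda(Z')+\mu(Z{-}Z')$; moreover $i\in Z$ and $i\notin Z'$ show that $Z$ is not below $Z'$, so Lemma~\ref{lemma:a-or-z} b) gives $\lambda(Z)\le\lambda(Z\lor Z')<\lambda(Z')+\mu(Z{-}Z')\le r(i)+r(Z{-}i)$, as needed.

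I expect the only real difficulty to be bookkeeping the strictness. The submodular-type estimates give only weak inequalities, and both conclusions — that $Z$ is a genuine flat and that the cyclicity gap is strictly negative — rely on the strict inequalities coming from \C*, used directly when the optimal $Z'$ is comparable to $Z$ and routed through the strict half of Lemma~\ref{lemma:a-or-z} when it is incomparable. In each half the natural case split is precisely whether the distinguished element ($a$, respectively $i$) lies in the minimizing $Z'$: if not, an extra strictly positive $\mu$-term appears; if so, one instead appeals to strict monotonicity.
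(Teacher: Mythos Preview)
Your proof is correct and follows essentially the same approach as the paper's: both parts hinge on Lemma~\ref{lemma:a-or-z} (the strict form b) for the decisive inequalities), together with \C5 b) for $\mu(a)>0$ in the flat argument and \C4 via Lemma~\ref{lemma:zero-min-LL} b) for $r(i)\ge\mu(i)$ in the cyclicity argument. The only cosmetic differences are that the paper organizes the flat case by the order relation between $Z$ and $Z'$ (rather than by whether $a\in Z'$) and runs the cyclicity argument as a single contradiction without splitting on whether $i\in Z'$; your explicit case split on the distinguished element is a perfectly equivalent bookkeeping choice.
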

\begin{proof}
By Claim \ref{claim:r-lambda-equal}, $r(Z)=\lambda(Z)$ for all lattice
elements. First we check that $Z\in\LL$ is a flat. Let $a\notin Z$, we want
to show that $aZ$ has larger rank than $Z$. As $a\notin O_\LL$, condition
\C5 b) says $\mu(a)>0$. Suppose
$r(aZ)=\lambda(Z')+\mu(aZ{-}Z')$. If $Z<Z'$ then $\lambda(Z)<\lambda(Z')$ by
\C*. If $Z=Z'$ then $r(aZ)=\lambda(Z)+\mu(a)>\lambda(Z)$. Otherwise $Z$ is
not below $Z'$, and then Lemma \ref{lemma:a-or-z} b) gives
$$
   \lambda(Z)\le \lambda(Z\lor Z')<\lambda(Z')+\mu(Z{-}Z')\le r(aZ),
$$
as required.
To show that $Z\in\LL$ is cyclic, let $a\in Z$ and suppose by
contradiction that $r(a)>0$ and $r(Z{-}a)=r(Z)-r(a)$. Let
$$
    r(Z{-}a) = \lambda(Z')+\mu(Z{-}aZ')=\lambda(Z)-r(a).
$$
As $\lambda(Z')<\lambda(Z)$, $Z\le Z'$ is impossible by \C*. Thus $Z$ is
not below $Z'$, and Lemma \ref{lemma:a-or-z} b) gives
\begin{align*}
  \lambda(Z) \le \lambda(Z\lor Z') &< \lambda(Z')+\mu(Z{-}Z') \\
   & \le \lambda(Z')+\mu(Z{-}aZ')+\mu(a) \\
   &=\lambda(Z)-r(a)+\mu(a).
\end{align*}
But this is impossible as by Lemma \ref{lemma:zero-min-LL} b),
$r(a)\ge\mu(a)$.
\end{proof}

\section{An example}

An illustrative example for using convolution
is a proof of Helgason's theorem \cite{helgason}
saying that integer polymatroids are factors of matroids. For other
examples see \cite{sticky-poly}. Let
$\M=(f,M)$ be an integer polymatroid. For $i\in M$ find disjoint sets $M_i$
such that $i\in M_i$ and $M_i$ has exactly $\max\{ 1, f(i) \}$ elements.
The lattice $\LL$ consists of subsets $Z\subseteq N=\bigcup_i M_i$
which have the property
$$
   \mbox{if $M_i\cap Z\not=\emptyset$, then $M_i\subseteq Z$}.
$$
The meet and join are the union and intersection, and $O_\LL$ is the empty
set. Define the rank $\lambda$ as
$$
    \lambda: Z \mapsto f(Z\cap M).
$$
The measure is the expected one: for $a\in M_i$ let $\mu(a)=\min\{ 1,f(i) \}$.
It is a routine to check that conditions \C1, \C2, \C3 and \C4 hold. Let
$\N=(r,N)$ be the convolution of the ranked lattice and the measure. 
It is clearly integer, and by Theorem \ref{thm:convolution} it is a
polymatroid. By Lemma \ref{lemma:zero-min-LL} $r(a)=\mu(a)$, consequently
the rank of singletons is either zero or one, which shows thus $\N$ is a 
matroid. Finally, Claim
\ref{claim:r-lambda-equal} says $r(Z)=\lambda(Z)$ for all $Z\in\LL$,
therefore $\M$ is a factor of $\N$ as required.

\medskip

Helgason's theorem is a special case of a more general statement. In the
construction above each singleton $i\in M$ with rank $f(i)\ge 2$ is
replaced by the free matroid of rank $f(i)$. Actually any matroid with this
rank can be used which is an immediate consequence of Theorem
\ref{thm:infiltrate}. Let us proceed with some definitions.

The \emph{convolution} of two ranked lattices $(\lambda_1,\LL_1)$ and
$(\lambda_2,\LL_2)$ is the function on subsets of $I_{\LL_1}\cup
I_{\LL_2}$ defined as
$$
   A \mapsto \min\nolimits_{Z_1,Z_2} \big\{
    \lambda_!(Z_1)+\lambda_2(Z_2) : A \subseteq Z_1\cup Z_2
   \big\}
$$
as $Z_1$ runs over elements of $\LL_1$ and $Z_2$ runs over elements of
$\LL_2$. If $\LL_2$ is the complete subset lattice and $\lambda_2$ is a
measure, then this formula is equivalent to (\ref{eq:convolution}) from
Section \ref{subsec:convolution}. In the very special case of Theorem
\ref{thm:infiltrate} the convolution of two ranked lattices defines a
polymatroid. It is an interesting problem under which general and useful
conditions this remains true.

Let $P,M$ be disjoint sets, and let $c\notin M\cup P$. $\M=(f,Mc)$ and 
$\N=(g,P)$ are polymatroids with $f(c)=g(P)$. The polymatroid  $(r,MP)$
\emph{infiltrates $P$ under $c$} if for all subsets $A\subseteq M$ and 
$B\subseteq P$ we have
\begin{align}\label{eq:infiltrate}
   r(A) &= f(A),~~~~  r(B)=g(B), \\
   r(AP) &= f(Ac),\nonumber
\end{align}
that is, $r$ extends both $f\restr M$ and $g$, and inserts $P$ in place
of $c$.

\begin{theorem}\label{thm:infiltrate}
For each $\M=(f,Mc)$ and $\N=(g,P)$ with $f(c)=g(P)$ one can
infiltrate $P$ under $c$.
\end{theorem}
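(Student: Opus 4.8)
The plan is to exhibit $(r,MP)$ as the convolution, in the sense introduced just before the theorem, of two ranked lattices built from $\M$ and $\N$. From $\M$ I would take $\LL_1$ to be the collection of all subsets $Z\subseteq MP$ with $Z\cap P=\emptyset$ or $P\subseteq Z$, ordered by inclusion, so that $\LL_1$ is just the subset lattice of $Mc$ with the element $c$ ``blown up'' to the block $P$; rank it by $\lambda_1(A)=f(A)$ and $\lambda_1(AP)=f(Ac)$ for $A\subseteq M$ (non-negative and monotone, as required). From $\N$ I would take $\LL_2$ to be the full subset lattice of $P$ with $\lambda_2=g$. Both ground sets lie in $MP=I_{\LL_1}\cup I_{\LL_2}$. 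A routine inspection of the defining minimum — a covering pair $(Z_1,Z_2)$ with $AB\subseteq Z_1\cup Z_2$ forces $A\subseteq Z_1$ since $Z_2\subseteq P$, after which one optimises separately over the two shapes $Z_1=A'$ and $Z_1=A'P$ — shows the convolution is
$$
   r(AB)=\min\{\,f(A)+g(B),\ f(Ac)\,\}\qquad\text{for }A\subseteq M,\ B\subseteq P .
$$

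Granting this formula, the three identities in (\ref{eq:infiltrate}) fall out at once from monotonicity and submodularity of $f,g$ together with $f(c)=g(P)$: with $B=\emptyset$ we get $r(A)=\min\{f(A),f(Ac)\}=f(A)$; with $A=\emptyset$, $r(B)=\min\{g(B),f(c)\}=\min\{g(B),g(P)\}=g(B)$; and with $B=P$, $r(AP)=\min\{f(A)+f(c),f(Ac)\}=f(Ac)$. So the only real content is that $(r,MP)$ is a polymatroid. I would write $r=\min(h_1,h_2)$ with $h_1(AB)=f(A)+g(B)$ and $h_2(AB)=f(Ac)$; non-negativity and monotonicity of $r$ are immediate, and each $h_i$ is submodular on subsets of $MP$ ($h_1$ because $f,g$ live on the disjoint sets $M,P$, and $h_2$ because $X\mapsto f((X\cap M)c)$ is submodular). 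Submodularity of a pointwise minimum of two submodular functions is not automatic, so one runs the usual case split: for $X,Y\subseteq MP$, if the two minima $r(X),r(Y)$ are realised by the same $h_i$ we are done by submodularity of $h_i$.

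The hard — indeed the only nonautomatic — case is when, say, $r(X)=h_1(X)$ while $r(Y)=h_2(Y)$. Here I would apply submodularity of $f$ on $Mc$ to the sets $X\cap M$ and $(Y\cap M)c$, getting
$$
  f(X\cap M)+f\big((Y\cap M)c\big)\ \ge\ f\big(((X\cup Y)\cap M)c\big)+f\big((X\cap Y)\cap M\big),
$$
and then add $g(X\cap P)\ge g((X\cap Y)\cap P)$; the outcome is exactly $h_1(X)+h_2(Y)\ge h_2(X\cup Y)+h_1(X\cap Y)\ge r(X\cup Y)+r(X\cap Y)$, establishing submodularity of $r$. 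Combined with the identities above, $(r,MP)$ is then a polymatroid that infiltrates $P$ under $c$. Note that this does not reduce directly to Theorem \ref{thm:convolution}, since $\lambda_2=g$ is a genuine polymatroid rank rather than a measure, so submodularity really has to be re-derived; the point of the computation is that it collapses neatly to submodularity of the original $f$ on $Mc$.
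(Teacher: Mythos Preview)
Your argument is correct and follows essentially the same route as the paper: the same two ranked lattices $\LL_1,\LL_2$, the same convolution formula $r(AB)=\min\{f(A)+g(B),\,f(Ac)\}$, and the same case split for submodularity. In fact you go further than the paper, which merely states that submodularity ``can be shown by a case by case checking''; your handling of the mixed case via submodularity of $f$ on $Mc$ applied to $X\cap M$ and $(Y\cap M)c$ is exactly the computation that makes this work.
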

\begin{proof}
Define two ranked lattices on subsets of $MP$ as follows:
\begin{align*}
  \LL_1 &= \{ Z_1\subseteq MP : \,\mbox{ $Z_1\cap P=\emptyset$, or 
                                        $Z_1\cap P=P$} \}, \\
  \LL_2 &= \{ Z_2\subseteq MP: \,\mbox{ $Z_2\cap M=\emptyset$} \}
\end{align*}
with rank functions
\begin{align*}
  \lambda_1(Z_1) &= \begin{cases}
                  f(Z_1\cap M) & \mbox{if $Z_1\cap P=\emptyset$},\\
                  f(c(Z_1\cap M)) & \mbox{if $Z_1\cap P=P$};
                  \end{cases} \\[2pt]
  \lambda_2(Z_2) &= g(Z_2\cap P).
\end{align*}
Let the convolution of the ranked lattices be $(r,MP)$. We claim that this
is the required extension. As both $\lambda_1$ and $\lambda_2$ are monotone,
the minimum is taken when $Z_1$ and $Z_2$ is the smallest possible.
Consequently for every $A\subseteq MP$,
\begin{equation}\label{eq:infiltrate2}
   r(A)=\min\big\{ f(A\cap M)+g(A\cap P), f(c(A\cap M)) \big\}.
\end{equation}
Using that $f(c)=g(P)$, conditions in (\ref{eq:infiltrate}) follow easily. Thus
one has to check only that $(r,MP)$ is a polymatroid. Non-negativity and
monotonicity is clear, and submodularity can be shown by a case by case
checking depending on which terms in (\ref{eq:infiltrate2}) provide the 
smaller value.
\end{proof}

\section*{Acknowledgment}

The author would like to thank the generous support of 
the Institute of Information Theory and Automation of the CAS, Prague.
The research reported in this paper was supported by GACR project
number 19-04579S, and partially by the Lend\"ulet program of the HAS.

\end{document}